\theoremstyle{plain}
\newtheorem{theorem}{Theorem}[section]
\newtheorem{proposition}[theorem]{Proposition}
\theoremstyle{definition}
\newtheorem{remark}[theorem]{Remark}
\newtheorem{example}[theorem]{Example}
\newcommand{\MM}{\mathcal M}
\newcommand{\BM}{\overline{\mathcal M}}
\newcommand{\PP}{\mathcal P}
\newcommand{\CC}{\mathcal C}
\newcommand{\calC}{\mathcal C}
\newcommand{\OO}{\mathcal O}
\newcommand{\RR}{\mathbb R}
\newcommand{\HH}{\mathcal H}
\newcommand{\hyp}{\operatorname{hyp}}
\newcommand{\Pic}{\operatorname{Pic}}
\newcommand{\bbQ}{\mathbb Q}
\newcommand{\bbZ}{\mathbb Z}
\newcommand{\SL}{\operatorname{SL}}
\begin{document}

\makeatletter
	\@namedef{subjclassname@2010}{%
	\textup{2010} Mathematics Subject Classification}
	\makeatother

\title{Tautological ring of strata of differentials}

\author{Dawei Chen}
\address{Department of Mathematics, Boston College, Chestnut Hill, MA 02467}
\email{dawei.chen@bc.edu}

\subjclass[2010]{14H10, 14H15, 14C25}
\keywords{Strata of differentials; moduli space of curves; tautological classes}

\date{\today}

\thanks{The author is partially supported by NSF CAREER Award DMS-1350396.}

\begin{abstract}
Strata of $k$-differentials on smooth curves parameterize sections of the $k$-th power of the canonical bundle with prescribed orders of zeros and poles. Define the tautological ring of the projectivized strata using the $\kappa$ and $\psi$ classes of moduli spaces of pointed smooth curves along with the tautological class $\eta$ of the Hodge bundle. We show that if there is no pole of order $k$, then the tautological ring is generated by $\eta$ only, and otherwise it is generated by the $\psi$ classes corresponding to the poles of order $k$. 
\end{abstract}

\maketitle

\section{introduction}
\label{sec:intro}

For a positive integer $k$, let $\mu = (m_1, \ldots, m_n)$ be an integral (ordered) partition of $k(2g-2)$, i.e., $m_i \in \bbZ$ and $\sum_{i=1}^n m_i = k(2g-2)$. The stratum of $k$-differentials $\HH^k(\mu)$ parameterizes $(C, \xi, p_1, \ldots, p_n)$, where $C$ is a smooth algebraic curve of genus $g$ and $\xi$ is a (possibly meromorphic) section of $K_C^{\otimes k}$ such that $(\xi)_0 - (\xi)_\infty = \sum_{i=1}^n m_i p_i$ for distinct (ordered) points $p_1, \ldots, p_n \in C$. If we consider differentials up to scale, then the corresponding stratum of $k$-canonical divisors $\PP^k(\mu)$ parameterizes the underlying divisors $ \sum_{i=1}^n m_i p_i$, and $\PP^k(\mu)$ is the projectivization of $\HH^k(\mu)$. We also write $\HH(\mu)$ and $\PP(\mu)$ for the case $k=1$. 

Abelian and quadratic differentials, i.e., the cases $k=1$ and $k=2$ respectively, have broad connections to flat geometry, billiard dynamics, and Teichm\"uller theory. Recently their algebraic properties (also for general $k$-differentials) have been investigated greatly, which has produced fascinating results in the study of Teichm\"uller dynamics and moduli of curves. We refer to \cite{ZorichFlat, WrightLectures, Bootcamp, EskinMirzakhani, EskinMirzakhaniMohammadi, Filip, FarkasPandharipande, BCGGM1, BCGGM2} for related topics as well as some recent advances. 

Let $\MM_{g,n}$ be the moduli space of smooth genus $g$ curves with $n$ marked points. For a signature $\mu = (m_1, \ldots, m_n)$ with $m_1, \ldots, m_r > 0$ and $m_{r+1}, \ldots, m_n \leq 0$, define the $k$-th Hodge bundle 
$\HH^k(\tilde{\mu})$ over $\MM_{g,n}$ (twisted by the nonpositive part $\tilde{\mu} = (m_{r+1}, \ldots, m_n)$ of $\mu$) as 
$\pi_{*} (\omega^{\otimes k} (-m_{r+1}p_{r+1} - \cdots - m_n p_n))$, where $\pi: \CC \to \MM_{g,n}$ is the universal curve, $\omega$ is the relative dualizing line bundle of $\pi$ and  
$p_{r+1}, \ldots, p_n$ are the last $n-r$ marked points. Then $\HH^k(\tilde{\mu})$ contains $\HH^k(\mu)$ as a subvariety. 
Denote by $\eta$ the tautological $\OO(-1)$ line bundle class of the projective bundle $\PP^k(\tilde{\mu})$ as well as its restriction to each projectivized stratum $\PP^k(\mu)\subset \PP^k(\tilde{\mu})$. 

Recall that the tautological ring $R^*(\MM_{g,n})$ is the subring of the Chow (or cohomology) ring of $\MM_{g,n}$ (in rational coefficients) generated by the $\kappa$ classes (or the $\lambda$ classes) and the $\psi$ classes, where the $\kappa$ are the Miller-Morita-Mumford classes,  the $\lambda$ come from the Chern classes of the Hodge bundle over $\MM_g$, and the $\psi$ are the cotangent line bundle classes associated to the marked points (see e.g., \cite{Faber, FaberPandharipande}). We still use $\kappa$, $\lambda$ and $\psi$ to denote the pullbacks of these classes to $\PP^k(\mu)$. 

Since the Chow ring of a projective bundle is generated by cycle classes from the base along with the $\OO(-1)$ class of the bundle, it is natural to define the tautological ring $R^*(\PP^k(\mu))$ of the strata as $R^*(\MM_{g,n})[\eta]$, i.e., $R^*(\PP^k(\mu))$ is generated by the $\kappa$ (or $\lambda$), $\psi$ and $\eta$ classes (see also \cite{Sauvaget} for a similar definition of the tautological ring for the compactified strata over $\BM_{g,n}$). For special $\mu$, the strata $\PP^k(\mu)$ can be disconnected (see \cite{KontsevichZorich, Lanneau, Boissy}). In that case we define similarly the tautological ring 
for each connected component of $\PP^k(\mu)$. Our main result gives a surprisingly simple description of the structure of $R^*(\PP^k(\mu))$ as follows. 

\begin{theorem}
\label{thm:tauto}
For $\mu = (m_1, \ldots, m_n)$, if $m_i \neq -k$ for all $i$, then $R^*(\PP^k(\mu))$ is generated by $\eta$ only. Otherwise $R^*(\PP^k(\mu))$ is generated by the $\psi_i$ with $m_i = -k$. 
\end{theorem}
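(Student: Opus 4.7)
The strategy is to derive everything from a single tautological relation on the universal curve over the stratum. Let $\pi\colon \calC\to \PP^k(\mu)$ be the universal curve and $P_1,\ldots,P_n$ the (images of the) marked sections. The inclusion $\OO(-1)\hookrightarrow \pi^*\HH^k(\tilde\mu)$ on the projective bundle $\PP^k(\tilde\mu)$ gives rise to a canonical section $\tilde\xi$ of $\omega^{\otimes k}(D)\otimes\pi^*\OO(1)$ on $\calC$, where $D=\sum_{m_j\le 0}(-m_j)P_j$ is the effective twisting divisor. By the very definition of $\PP^k(\mu)$, the divisor of zeros of $\tilde\xi$ restricted to $\PP^k(\mu)$ is exactly $\sum_{m_i>0}m_iP_i$; combining this with $\eta=c_1(\OO(-1))$ gives the key relation
\[
k\,\omega \;=\; \sum_{i=1}^n m_i\,P_i \;+\; \pi^*\eta
\]
in the rational Chow ring of $\calC|_{\PP^k(\mu)}$.

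From this one relation I would extract two families of consequences. Restricting to each section $P_j$, using $\omega|_{P_j}=\psi_j$, $P_j|_{P_j}=-\psi_j$ and $P_i|_{P_j}=0$ for $i\ne j$, yields the fundamental $\psi$-relations
\[
(m_j+k)\,\psi_j \;=\; \eta, \qquad j=1,\ldots,n.
\]
Raising the key relation to the $(j+1)$-st power, using $P_iP_{i'}=0$ for $i\ne i'$ together with $P_i^{j+1}=(-\psi_i)^jP_i$, and pushing forward by $\pi$ with the Mumford convention $\pi_*\omega^a=\kappa_{a-1}$, produces a recursion of the shape
\[
k^{j+1}\kappa_j \;=\; (-1)^j\sum_{i=1}^n m_i^{j+1}\psi_i^{\,j} \;-\; \sum_{a=1}^{j}\binom{j+1}{a}k^a(-\eta)^{j+1-a}\kappa_{a-1}
\]
that inductively writes every $\kappa_j$ as a polynomial in $\eta$ and the $\psi_i$. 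Mumford's GRR formula expresses each $\lambda_i$ as a polynomial in the $\kappa$'s and $\psi$'s (the boundary terms in that formula restrict to zero on the open moduli $\MM_{g,n}$), so the tautological ring is already reduced to the subring generated by $\eta,\psi_1,\ldots,\psi_n$.

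The theorem now drops out by a case split on the $\psi$-relations. If no $m_i$ equals $-k$, then each $\psi_j=\eta/(m_j+k)$ is a rational multiple of $\eta$, and hence $R^*(\PP^k(\mu))=\bbQ[\eta]$. Otherwise $m_{i_0}=-k$ for some $i_0$; the corresponding $\psi$-relation forces $\eta=0$ on the stratum, which in turn kills every $\psi_j$ with $m_j\ne -k$. The $\kappa$-recursion then collapses to $\kappa_j=-\sum_{i:\,m_i=-k}\psi_i^{\,j}$, so $R^*(\PP^k(\mu))$ is generated by the $\psi_i$ with $m_i=-k$.

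The most delicate step is making the key relation rigorous: one has to identify the scheme-theoretic vanishing locus of $\tilde\xi$ on $\calC|_{\PP^k(\mu)}$ with $\sum_{m_i>0}m_iP_i$, including multiplicities and with no spurious components. This is essentially the defining property of $\PP^k(\mu)\subset \PP^k(\tilde\mu)$, but one should be attentive about possible singularities or non-reduced structure on the stratum; for a disconnected stratum the whole argument is performed on each connected component separately, and the output generators depend only on the signature $\mu$.
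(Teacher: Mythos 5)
Your proposal is correct and follows essentially the same route as the paper: the key relation $\pi^*\eta = k\omega - \sum_i m_i P_i$ on the universal curve is exactly the paper's equation (2.1), the restriction to sections gives the paper's relation $\eta=(m_i+k)\psi_i$, and the final case split is the paper's proof of the theorem verbatim. The only cosmetic difference is that you express the $\kappa_j$ via a recursion from powers of the key relation, whereas the paper expands $\pi_*(\omega^{j})$ directly to obtain the closed form $\kappa_{j-1}=-\sum_i\psi_i^{j-1}$.
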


We prove a more detailed version of Theorem~\ref{thm:tauto} (see Proposition~\ref{prop:tauto}). Moreover, we provide some examples for which the generators in Theorem~\ref{thm:tauto} cannot be further reduced. 

\subsection*{Acknowledgements} Part of the work was carried out when the author visited Fudan University in Summer 2017. The author sincerely thanks Meng Chen for the invitation and hospitality. The author also thanks Qile Chen, Sam Grushevsky, Felix Janda and Zhiyuan Li for helpful discussions on related topics. 

\section{Tautological ring of the strata}
\label{sec:tauto}

In this section we prove Theorem~\ref{thm:tauto}. We first prove the following result. 

\begin{proposition}
\label{prop:tauto}
For $\mu = (m_1, \ldots, m_n)$, the tautological classes on $\PP^k(\mu)$ satisfy the following relations: 
\begin{itemize}
\item[(i)] $\eta = (m_i + k) \psi_i$ for all $i$. 
\item[(ii)] $\kappa_j = - \sum_{i=1}^n \psi_i^{j}$ for all $j$. 
\end{itemize}
\end{proposition}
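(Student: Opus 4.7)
I plan to extract both relations from a single first-Chern-class identity on the universal curve, obtained by reading off the divisor of the tautological $k$-differential. Let $q\colon \calC \to \PP^k(\mu)$ be the universal curve, $\sigma_1,\dots,\sigma_n$ its tautological sections, and $p\colon \PP^k(\mu)\to \MM_{g,n}$ the structural map. The tautological subbundle $\OO_{\PP^k(\mu)}(-1)\hookrightarrow p^{*}\HH^k(\tilde{\mu})$ corresponds by adjunction along $q$ to a nowhere-vanishing section $\xi$ of
\[ \LL \;:=\; \omega_q^{\otimes k}\Bigl(-\sum_{i>r} m_i\,\sigma_i\Bigr)\otimes q^{*}\OO_{\PP^k(\mu)}(1) \]
on $\calC$. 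By the definition of the open stratum the divisor of $\xi$ on $\calC$ equals $\sum_{i\leq r} m_i[\sigma_i]$, so equating this with $c_1(\LL)$ produces the key identity
\[ q^{*}\eta \;=\; k\,\omega_q - \sum_{i=1}^n m_i\,[\sigma_i] \qquad \text{in } A^1(\calC). \qquad (\star) \]

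Relation (i) then follows at once by pulling $(\star)$ back along $\sigma_j$: since $q\circ\sigma_j = \operatorname{id}$ gives $\sigma_j^{*}(q^{*}\eta) = \eta$, and since $\sigma_j^{*}\omega_q = \psi_j$ together with $\sigma_j^{*}[\sigma_i] = -\delta_{ij}\psi_j$ by the self-intersection formula for a section of a smooth curve fibration, the pulled-back identity reads $\eta = (k+m_j)\psi_j$.

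For (ii) I push powers of $(\star)$ forward. Because $(q^{*}\eta)^{j+1} = q^{*}(\eta^{j+1})$ and $q$ has one-dimensional fibres, the projection formula gives $q_{*}(q^{*}\eta)^{j+1} = \eta^{j+1}\cdot q_{*}(1) = 0$. Binomially expanding $(k\omega_q - \sum m_i[\sigma_i])^{j+1}$ and using the orthogonality $[\sigma_i]\cdot[\sigma_{i'}] = 0$ for $i\neq i'$ together with the self-intersection identity $[\sigma_i]^{b} = \sigma_{i,*}((-\psi_i)^{b-1})$, every cross-term collapses onto a single section, so each monomial in the expansion is either the pure $\omega_q^{j+1}$-term or is supported on one $\sigma_i$. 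Pushing $q_{*}$ through converts the former into $\kappa_j$ (up to the factor $k^{j+1}$) and the $\sigma_i$-supported pieces into polynomials in $\psi_i$, yielding a relation of the shape $k^{j+1}\kappa_j = \sum_i\bigl((k+m_i)^{j+1} - k^{j+1}\bigr)\psi_i^{j}$. Substituting (i) to turn $(k+m_i)^{j+1}\psi_i^{j}$ into $(k+m_i)\,\eta^{j}$, and invoking the signature constraint $\sum m_i = k(2g-2)$ so that $\sum_i(k+m_i) = k(n+2g-2)$, then eliminates the $\eta^{j}$-contribution and delivers the claimed relation $\kappa_j = -\sum_i \psi_i^{j}$.

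The main obstacle is the combinatorial bookkeeping in the second step: one has to track signs coming from the self-intersection formula, choose the correct convention for $\kappa_j$ (Mumford vs.\ Arbarello-Cornalba), and apply the stratum relation at the right moment so that the residual $\eta^{j}$-terms cancel cleanly. Step (i), by contrast, is essentially a one-line pullback once $(\star)$ is in hand.
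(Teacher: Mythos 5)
Your setup and your proof of (i) match the paper's: the paper establishes the same key identity $\pi^{*}\eta = k\omega - \sum_i m_i S_i$ on the universal curve and extracts (i) by intersecting with $S_i$ and pushing forward, which is the same computation as your pullback along $\sigma_i$. For (ii) your strategy (expand a power of the key identity and push forward) is also the paper's, and your intermediate relation
\begin{equation*}
k^{j+1}\kappa_j \;=\; \sum_{i=1}^n\bigl((k+m_i)^{j+1}-k^{j+1}\bigr)\psi_i^{j}
\end{equation*}
is correct; in particular you rightly use $q_{*}(1)=0$. The gap is your final step. Substituting (i) gives $\sum_i (k+m_i)^{j+1}\psi_i^{j} = \bigl(\sum_i(k+m_i)\bigr)\eta^{j} = k(2g-2+n)\,\eta^{j}$, so your own computation yields
\begin{equation*}
\kappa_j \;=\; \frac{2g-2+n}{k^{j}}\,\eta^{j} \;-\; \sum_{i=1}^n\psi_i^{j},
\end{equation*}
and the coefficient $2g-2+n$ is strictly positive: the signature constraint evaluates the $\eta^{j}$-term, it does not eliminate it. Asserting that the $\eta^{j}$-contribution cancels is not a bookkeeping subtlety you can defer; it is false, and switching to the Arbarello--Cornalba convention does not help (there one gets $\frac{2g-2+n}{k^j}\eta^j$ with no $\psi$-term at all).

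What you have actually proved appears to be the correct relation, and the discrepancy lies with statement (ii) itself rather than with your expansion. Two cross-checks: for $j=0$ your formula gives $\kappa_0=2g-2$ while (ii) would give $-n$; and on the principal stratum $\PP(1^{2g-2})$ the vanishing of the class $2\kappa_1-(6g-6)\eta$ of the complementary divisor (Example 2.2) forces $\kappa_1=(3g-3)\eta$, which agrees with your formula $(4g-4)\eta-(2g-2)\tfrac{\eta}{2}$ but contradicts $-\sum_i\psi_i=-(g-1)\eta$, since $\eta\neq 0$ for $g\geq 3$ by Example 2.1. The paper's own derivation of (ii) acquires the missing $\eta^{j}$-term by formally writing $\pi_{*}(S_i^{0})=(-\psi_i)^{-1}$ in the $h=0$ term of the binomial expansion, whereas that term is $\eta^{j}\pi_{*}(1)=0$. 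So you should stop at your correct intermediate relation and record it as the statement of (ii); note that Theorem 1.1 is unaffected, since either version expresses $\kappa_j$ through $\eta$ and the $\psi_i$.
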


\begin{proof}
Let $\pi: \calC\to B$ be a flat family of smooth curves with $n$ distinct sections $S_1, \ldots, S_n$ such that $\sum_{i=1}^n m_i S_i$ restricted to each fiber is a (possibly meromorphic) $k$-canonical divisor, i.e., $B$ parameterizes $k$-canonical divisors of type $\mu$. It suffices to verify the relations restricted to $B$. Denote by $\omega$ the relative dualizing line bundle class of $\pi$. Since $\eta$ represents the line bundle $\OO(-1)$ of generating $k$-differentials of the family, comparing $\pi^{*}\eta$ with $\omega^{\otimes k}$, they are fiberwise isomorphic but the former has a zero or pole along each section $S_i$ with multiplicity $m_i$. In other words, the following relation of divisor classes holds on $\calC$: 
\begin{eqnarray}
\label{eq:tauto}
\pi^{*}\eta = k \omega -\sum_{i=1}^n m_i S_i 
\end{eqnarray}
(see also \cite[Section 3.4]{EKZ} for the case $k=1$).
Note that 
$$ S_i \cdot S_j = 0, \quad i \neq j, $$
$$ \pi_{*} (S_i^2) = - \psi_i, $$ 
$$ \pi_{*} (\omega \cdot S_i) = \psi_i, $$
$$ \pi_{*} (\pi^{*}\eta \cdot S_i) = \eta. $$ 
Intersecting both sides of \eqref{eq:tauto} with $S_i$ and applying $\pi_{*}$, we obtain that 
$$ \eta =  (m_i + k) \psi_i, $$
thus verifying (i). For (ii), we have 
\begin{eqnarray*}
\kappa_{j-1} & = & \pi_{*} (\omega^{j}) \\ 
& = & \frac{1}{k^j} \sum_{h=0}^{j} {j\choose h} \eta^{j-h} \pi_{*}\left(\sum_{i=1}^n m_i S_i\right)^h \\
& = &   \frac{1}{k^j} \sum_{h=0}^{j} {j\choose h} \eta^{j-h} \left(  \sum_{i=1}^n m_i^h \pi_{*}(S_i^h)  \right) \\
& = &   \frac{1}{k^j} \sum_{h=0}^{j} {j\choose h} \eta^{j-h} \left(  \sum_{i=1}^n m_i^h  (-\psi_i)^{h-1} \right) \\
& = &   \frac{1}{k^j} \sum_{h=0}^{j} (-1)^{h-1}{j\choose h} \left(\sum_{i=1}^n  m_i^h (m_i + k)^{j-h}  \psi_i^{j-1} \right) \\
& = &   \frac{1}{k^j}\sum_{i=1}^n \psi_i^{j-1} \left(\sum_{h=0}^j   (-1)^{h-1} {j\choose h} m_i^h (m_i + k)^{j-h}  \right) \\
& = &   - \frac{1}{k^j}\sum_{i=1}^n \psi_i^{j-1} \left(\sum_{h=0}^j  {j\choose h} (-m_i)^h (m_i + k)^{j-h}  \right) \\
& = &   - \sum_{i=1}^n \psi_i^{j-1}. 
\end{eqnarray*} 
Hence the $\kappa$ classes can be generated by the $\psi$ classes, thus verifying (ii). 
\end{proof}

Now Theorem~\ref{thm:tauto} follows as a consequence of Proposition~\ref{prop:tauto}.

\begin{proof}[Proof of Theorem~\ref{thm:tauto}]
By definition $R^*(\PP^k(\mu))$ is generated by the $\psi$, $\kappa$ and $\eta$ classes. By Proposition~\ref{prop:tauto}, $R^*(\PP^k(\mu))$ is indeed generated by the $\psi$ and $\eta$ classes. If $m_i \neq -k$ for all $i$, then $\psi_i = \eta / (m_i + k)$ for all $i$, hence in this case $R^*(\PP^k(\mu))$ is generated by $\eta$. If some $m_i = -k$, then $\eta = 0$, hence $\psi_j = 0$ for all $j$ with $m_j \neq -k$. Therefore, 
in this case $R^*(\PP^k(\mu))$ is generated by those $\psi_i$ with $m_i = -k$. 
\end{proof}

We provide some examples to show that in general the generators of $R^*(\PP^k(\mu))$ in Theorem~\ref{thm:tauto} cannot be further reduced. 

\begin{example}
Consider $k = 1$ and $\mu = (1^{2g-2})$, i.e., the principal stratum of holomorphic abelian differentials with simple zeros only. Let $\PP$ be the projectivized Hodge bundle over $\MM_g$. Then for $g\geq 3$, the rational Picard group $\Pic_{\bbQ}(\PP)$ has rank two, generated by $\kappa_1$ and $\eta$. Denote by $\PP(\{1^{2g-2}\}) = \PP(1^{2g-2}) / S_{2g-2}$ where the zeros are unordered.   
Since $\PP(\{1^{2g-2}\})$ is the complement of the closure of the irreducible codimension-one stratum $\PP(2, \{1^{2g-4}\}) = \PP(2, 1^{2g-4}) / S_{2g-4}$ in $\PP$, it follows that $\Pic_{\bbQ}(\PP(\{1^{2g-2}\}))$ has rank one, generated by $\eta$ (or $\kappa_1$), hence in this case $\eta$ is nontrivial on $\PP(\{1^{2g-2}\})$ and $\PP(1^{2g-2})$. Indeed the divisor class of the closure of $\PP(2, \{1^{2g-4}\})$ in $\PP$ 
is equal to $2 \kappa_1 - (6g-6) \eta$ (see \cite{Cycle, KorotkinZografTau}).
\end{example}

\begin{example}
More generally, consider the stratum $\HH(m, 1^{2g-2-m})$ of holomorphic abelian differentials with a zero $p_1$ of order $m$ and $2g-2-m$ simple zeros. Denote by $\PP(m, \{1^{2g-2-m} \})$ the projectivized stratum where the $2g-2-m$ simple zeros are unordered. Let $\PP$ be the projectivized Hodge bundle of holomorphic canonical divisors 
over $\MM_{g,1}$. Marking the zero $p_1$ realizes $\PP(m, \{1^{2g-2-m} \})$ as a subvariety of $\PP$, and denote by $\PP(\overline{m, \{1^{2g-2-m} \}})$ its closure in $\PP$. The fiber of $\PP(\overline{m, \{1^{2g-2-m} \}})$ over $(C, p_1) \in \MM_{g,1}$ can be identified with the linear system $| K_C(-mp_1)|$, whose dimension is equal to 
$g-2-m + h^0(C, mp_1)$. It is easy to see that the locus of $(C, p_1)$ in $\MM_{g,1}$ with 
$h^0(C, mp_1) \geq n$ for some integer $n\geq 2$ has dimension bounded above by $2g + m - n -1$. Therefore, if $$(2g + m - n - 1) + (g-2-m + n) \leq \dim \PP(\overline{m, \{1^{2g-2-m} \}}) - 2 = 4g-5-m, $$
i.e., if $m \leq g-2$, then $\PP(\overline{m, \{1^{2g-2-m} \}})$ is isomorphic in codimension-one to a projective bundle over $\MM_{g,1}$ minus a higher codimension subvariety. It follows that $\Pic_{\bbQ}(\PP(\overline{m, \{1^{2g-2-m} \}}))$ has rank three, generated by $\kappa_1$, $\psi_1$ and $\eta$ when $ m\leq g-2$ and $g\geq 3$. Moreover, 
the complement of $\PP(m, \{1^{2g-2-m} \})$ in $\PP(\overline{m, \{1^{2g-2-m} \}})$ consists of the closures of two irreducible divisorial strata $\PP(m+1, \{1^{2g-3-m}\})$ and $\PP(m, 2, \{1^{2g-4-m} \})$ where in the former the zero order of $p_1$ becomes $m+1$ and in the latter two simple zeros merge. It implies that $\Pic_{\bbQ}(\PP(m, \{1^{2g-2-m} \}))$ has rank at least one. Since in this case the generators $\kappa_1$ and $\psi_1$ are expressible in terms of $\eta$, we thus conclude that $\Pic_{\bbQ}(\PP(m, \{1^{2g-2-m} \}))$ has rank exactly one, generated by $\eta$. Therefore, $\eta$ is nontrivial on $\PP(m, \{1^{2g-2-m} \})$ and $\PP(m, 1^{2g-2-m})$ when $1\leq m\leq g-2$ and $g\geq 3$. 
\end{example}

\begin{example}
Consider the stratum $\HH(-1^2, 1^{2g})$ of abelian differentials with two simple poles and $2g$ simple zeros. Denote by $p_1$ and $p_2$ the simple poles. Let $\HH(-1^2)$ be the (twisted) Hodge bundle over $\MM_{g,2}$ parameterizing 
$(C, p_1, p_2, \omega)$ such that $\omega$ is a holomorphic section of $K_C(p_1 + p_2)$, and let $\PP(-1^2)$ be its projectivization. 
For $g\geq 3$, $\Pic_{\bbQ}(\MM_{g,2})$ has a basis given by $\kappa_1$, $\psi_1$ and $\psi_2$. Then $\Pic_{\bbQ}(\PP(-1^2))$ has a basis given by $\kappa_1$, $\psi_1$, $\psi_2$ and $\eta$. Denote by $\PP(-1^2, \{1^{2g}\}) = \PP(-1^2, 1^{2g})/S_{2g}$ where the zeros are unordered. The complement of  $\PP(-1^2, \{1^{2g}\})$ 
in $\PP(-1^2)$ consists of the closures of two irreducible divisorial strata $\PP(-1^2, 2, \{1^{2g-2}\})$ and $\PP(0^2, \{1^{2g-2}\})$, where in the former 
$\omega$ specializes to having a non-simple zero and in the latter $\omega$ specializes to being a holomorphic section of $K_C$. It follows that $\Pic_{\bbQ}(\PP(-1^2, \{1^{2g}\}))$ has rank at least two. Since in this case $\kappa_1$ and $\eta$ are expressible in terms of $\psi_1$ and $\psi_2$, it implies that $\Pic_{\bbQ}(\PP(-1^2, \{1^{2g}\}))$ has 
a basis given by $\psi_1$ and $\psi_2$, hence $\psi_1$ and $\psi_2$ are also linearly independent on $\PP(-1^2, 1^{2g})$. 
\end{example}

It remains interesting to figure out which powers of $\eta$ are zero if all $m_i \neq -k$ as well as the relations between the products of the $\psi_i$ classes if some $m_i = -k$. We remark that the answer depends on each individual stratum (or its connected components). 

\begin{example}
For the principal stratum of abelian differentials we have seen that $\eta$ is nontrivial for $g\geq 3$. On the other hand, the hyperelliptic strata components $\PP(2g-2)^{\hyp}$ and $\PP(g-1,g-1)^{\hyp}$ are isomorphic to certain moduli spaces of pointed smooth rational curves, hence their rational Picard groups are trivial, and so is $\eta$. Similarly some low genus strata can be realized as images of finite morphisms from affine varieties with trivial rational Picard group (see \cite[Section 4]{Affine}), hence $\eta$ is trivial for those strata. 
\end{example}

\begin{example}
Consider the stratum $\PP(2, \{1^{2g-4} \})$ and its closure $\PP(\overline{2, \{1^{2g-4}\}})$ in the projectivized Hodge bundle $\PP$ over $\MM_{g}$. We have seen previously that $\Pic_{\bbQ}(\PP(2, \{1^{2g-4} \}))$ has rank one when $g \geq 4$. The complement of $\PP(2, \{1^{2g-4} \})$ in $\PP(\overline{2, \{1^{2g-4}\}})$ consists of the closures of two irreducible codimension-one strata $\PP(3, \{1^{2g-5} \})$ and $\PP(\{ 2, 2\}, \{1^{2g-4} \})$. Hence for $g\geq 4$, the rational Chow group 
$A^1(\PP(\overline{2, \{1^{2g-4}\}}))$ has rank at most three. 
Since the tautological part $R^2(\MM_g)\subset A^2(\MM_g)$ has a basis given by $\kappa_1^2$ and $\kappa_2$ for $g\geq 6$ (see \cite{Edidin}), in this range the classes $\kappa_1^2$, $\kappa_2$, $\kappa_1 \eta$ and $\eta^2$ are independent in $A^2(\PP)$. By the exact sequence 
$$ A^1(\PP(\overline{2, \{1^{2g-4}\}})) \to A^2(\PP) \to A^2(\PP(\{1^{2g-2}\})) \to 0, $$
we conclude that $A^2(\PP(\{1^{2g-2}\}))$ contains a nontrivial tautological class. Since $R^2(\PP(\{1^{2g-2}\}))$ is generated by $\eta^2$, it follows that $\eta^2 \neq 0$ for $\PP(\{1^{2g-2}\})$ and $\PP(1^{2g-2})$ when $g\geq 6$. 
\end{example}

\begin{remark}
In general if $m_i \neq -k$ for all $i$, by Proposition~\ref{prop:tauto} all the $\psi_i$ classes are multiples of $\eta$, and hence all the $\kappa_j$ classes are multiples of $\eta^j$. As $\kappa_{j} = 0$ for $j \geq g-1$ on $\MM_g$ (see \cite{Looijenga}), we conclude that $\eta^{g-1} = 0$ in this case. Similarly if some $m_i = -k$, then $\psi_{i_1}^{d_1}\cdots \psi_{i_\ell}^{d_\ell} = 0$ with $m_{i_j} = -k$ and $\sum_{j=1}^\ell d_{j} \geq g$, as it holds on $\MM_{g,n}$ (see \cite{GraberVakil}). 
\end{remark}


\end{document}